\newtheorem{theorem}{Theorem}
\newtheorem{lemma}[theorem]{Lemma}
\theoremstyle{definition}
\theoremstyle{remark}
\numberwithin{equation}{section}
\begin{document}

% \title[short text for running head]{full title}
\title{Perfect Parallelepipeds Exist}

%    Only \author and \address are required; other information is
%    optional.  Remove any unused author tags.

%    author one information
% \author[short version for running head]{name for top of paper}
\author{Jorge F. Sawyer\cr
Box 8681 Farinon Center, Lafayette College, Easton, PA 18042\cr
\href{maileto: sawyerj@lafayette.edu}{e-mail: sawyerj@lafayette.edu}
\and
Clifford A. Reiter\cr
Department of Mathematics, Lafayette College, Easton, PA 18042\cr
\href{mailto: reiterc@lafayette.edu}{e-mail: reiterc@lafayette.edu}\cr
}

%    \subjclass is required.
%\subjclass[2000]{Primary 11D09}
%    The 2010 edition of the Mathematics Subject Classification is
%    now available.  If you are citing a classification from the
%    new scheme, use the following input coding instead.
%\subjclass[2010]{Primary }

\date{}

%\dedicatory{}

\maketitle

%    Abstract is required.
\begin{abstract}
There are parallelepipeds with edge lengths, face diagonal lengths and body diagonal lengths all positive integers. In particular, there is a parallelepiped with edge lengths $271$, $106$, $103$, minor face diagonal lengths $101$, $266$, $255$, major face diagonal lengths $183$, $312$, $323$, and body diagonal lengths $374$, $300$, $278$, $272$. Focused brute force searches give dozens of primitive perfect parallelepipeds. Examples include parallellepipeds with up to two rectangular faces.
\end{abstract}

%    Text of article.
\section{Introduction}
A famous open problem in Number Theory is whether there exists a perfect cuboid. That is, is there a rectangular box in $\mathbb{R}^3$ with positive integer edge lengths, face diagonal lengths and body diagonal lengths \cite{Guy,Leech}? In \cite{Guy} Richard Guy poses the weaker question of whether there exist perfect parallellepipeds in $\mathbb{R}^3$. A perfect parallelepiped is a parallelepiped with edge lengths, face diagonal lengths and body diagonal lengths all positive integers. Previous attempts at finding perfect parallelepipeds focused on using rational coordinates 
\cite{Dargenio, Reiter,
Tirrell}. Here we show that perfect parallelepipeds exist by 
giving examples and we describe a technique using necessary 
conditions within brute force searches that check at the last stage whether 
proposed perfect parallelepipeds can be realized in $\mathbb{R}^3$. 

\section{There is a Perfect Parallelepiped}
While we will discuss our search strategy in the next section, it is straighforward to
exhibit and verify that a perfect parallelepiped exists, which is our main result. We call the shorter diagonal of a parallegram the minor diagonal and the longer diagonal the major diagonal. These will be the same for a rectangle. 

% THM1
\begin{theorem} There is a perfect parallelepiped with edge lengths $271$, $106$, $103$, minor face diagonal lengths $101$, $266$, $255$, major face diagonal lengths $183$, $312$, $323$, and body diagonal lengths $374$, $300$, $278$, $272$. 
\end{theorem}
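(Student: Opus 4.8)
The plan is to exhibit explicit vectors $\mathbf{u},\mathbf{v},\mathbf{w}\in\mathbb{R}^3$ spanning the parallelepiped and then verify that all seven edge/face-diagonal classes and all four body diagonals have the claimed integer lengths. Concretely, the parallelepiped generated by $\mathbf{u},\mathbf{v},\mathbf{w}$ has three edge vectors $\mathbf{u},\mathbf{v},\mathbf{w}$; three pairs of face diagonals $\mathbf{u}\pm\mathbf{v}$, $\mathbf{u}\pm\mathbf{w}$, $\mathbf{v}\pm\mathbf{w}$; and four body diagonals $\mathbf{u}+\mathbf{v}+\mathbf{w}$, $-\mathbf{u}+\mathbf{v}+\mathbf{w}$, $\mathbf{u}-\mathbf{v}+\mathbf{w}$, $\mathbf{u}+\mathbf{v}-\mathbf{w}$. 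So the whole theorem reduces to checking $13$ squared-norm identities once suitable coordinates are in hand.

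The first and main step is actually producing the coordinates — equivalently, the Gram matrix. Rather than guessing vectors, I would start from the combinatorial data: set $a=|\mathbf{u}|$, $b=|\mathbf{v}|$, $c=|\mathbf{w}|$ and note that $|\mathbf{u}+\mathbf{v}|^2+|\mathbf{u}-\mathbf{v}|^2 = 2a^2+2b^2$, so each face forces the parallelogram law $p^2+q^2 = 2a^2+2b^2$ on its minor/major diagonals $p,q$; one checks $101^2+183^2 = 2(103^2+106^2)$ and the analogous two identities, which fixes the three inner products $\mathbf{u}\cdot\mathbf{v}$, $\mathbf{u}\cdot\mathbf{w}$, $\mathbf{v}\cdot\mathbf{w}$ via $\mathbf{u}\cdot\mathbf{v} = \tfrac14(|\mathbf{u}+\mathbf{v}|^2-|\mathbf{u}-\mathbf{v}|^2)$ (with a sign choice per face — which diagonal is ``$+$''). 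This gives a candidate Gram matrix $G$ with integer (or half-integer) entries. I would then verify $G$ is positive definite (e.g. Sylvester's criterion / leading principal minors, or just exhibit its Cholesky factor), which guarantees realizability in $\mathbb{R}^3$; taking $\mathbf{u},\mathbf{v},\mathbf{w}$ to be the rows of the Cholesky factor of $G$ yields explicit coordinates.

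With $G$ in hand, the remaining step is routine bookkeeping: the four body-diagonal squared lengths are $\mathbf{1}^{\!\top}S\,G\,S\,\mathbf{1}$ for the four sign patterns $S=\mathrm{diag}(\pm1,\pm1,\pm1)$ (up to global sign), and one simply checks these equal $374^2, 300^2, 278^2, 272^2$ in the correct pairing, while the edge and face-diagonal lengths are built into $G$ by construction. All of this is a finite exact computation over $\mathbb{Q}$ (clearing the factor of $4$ to stay in $\mathbb{Z}$), so there is no analytic subtlety.

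The main obstacle is not any single hard argument but getting the sign choices consistent: each face independently admits two assignments of ``minor = $\mathbf{u}+\mathbf{v}$'' versus ``minor = $\mathbf{u}-\mathbf{v}$'', and only the combinations that make $G\succ0$ and simultaneously reproduce all four prescribed body-diagonal lengths are valid. I expect exactly one consistent choice (up to the symmetries $\mathbf{u}\mapsto-\mathbf{u}$ etc.), and pinning it down — then confirming positive-definiteness — is where the real content lies; everything after that is verification.
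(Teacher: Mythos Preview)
Your approach is correct and essentially the same as the paper's: the paper simply exhibits explicit vectors $\vec{u},\vec{v},\vec{w}$ in lower-triangular form---i.e., precisely the Cholesky factor of your Gram matrix $G$---and verifies the thirteen squared-norm identities by direct computation. Your Gram-matrix/positive-definiteness framing is the systematic route to those same coordinates, and your discussion of sign choices is exactly the issue the paper handles implicitly by fixing which face diagonals are $\|\vec{u}-\vec{v}\|$ versus $\|\vec{u}+\vec{v}\|$; the content of the two proofs is identical.
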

\begin{proof}
Consider the parallelepiped with edge vectors given by $\vec{u}=\langle 271,0,0\rangle$,
$\vec{v}=\langle\frac{9826}{271},\frac{60\sqrt{202398}}{271},0\rangle$,
$\vec{w}=\langle\frac{6647}{271},\frac{143754}{271}\sqrt{\frac{42}{4819}},66\sqrt{\frac{8358}{4819}}\rangle$. Direct computation verifies that 
$\|\vec{u}\|=271$, 
$\|\vec{v}\|=106$, 
$\|\vec{w}\|=103$, 
$\|\vec{u}-\vec{v}\|=255$, 
$\|\vec{u}-\vec{w}\|=266$, 
$\|\vec{v}-\vec{w}\|=101$, 
$\|\vec{u}+\vec{v}\|=323$, 
$\|\vec{u}+\vec{w}\|=312$, 
$\|\vec{v}+\vec{w}\|=183$, 
$\|\vec{u}+\vec{v}+\vec{w}\|=374$, 
$\|\vec{u}+\vec{v}-\vec{w}\|=300$, 
$\|\vec{u}-\vec{v}+\vec{w}\|=278$, and 
$\|-\vec{u}+\vec{v}+\vec{w}\|=272$. 
\end{proof}
A $Mathematica^{TM}$ script verifying those computations may be found at \cite{Sawyer}. Additional examples of perfect parallelepipeds may also be found there. These include
parallelepipeds with two rectangular faces. The parallelepiped with edge vectors $\vec{u}=\langle 1120,0,0\rangle$,
$\vec{v}=\langle 0,1035,0\rangle$,
$\vec{w}=\langle 0,\frac{46548}{115},\frac{12}{115}\sqrt{49755859}\rangle$ has that form. In particular, it has edge lengths 
$\|\vec{u}\|=1120$, 
$\|\vec{v}\|=1035$ and 
$\|\vec{w}\|=840$. The rectangular face diagonal lengths are
$\|\vec{u}+\vec{v}\|=\|\vec{u}-\vec{v}\|=1525$
and $\|\vec{u}+\vec{w}\|=\|\vec{u}-\vec{w}\|=1400$
and the other face has diagonal lengths 
$\|\vec{v}-\vec{w}\|=969$ and $\|\vec{v}+\vec{w}\|=1617$. 
The body diagonals lengths are 
$\|\vec{u}+\vec{v}+\vec{w}\|=\|\vec{u}-\vec{v}-\vec{w}\|=1967$ and 
$\|\vec{u}+\vec{v}-\vec{w}\|=\|\vec{u}-\vec{v}+\vec{w}\|=1481$. 
 
\section{The Search}
First we observe that the major diagonal of a parallelogram can be expressed in terms of the edges and the minor diagonal. That can be used to facilitate the search for perfect parallelograms; that is, parallelograms with edge and diagonal lengths that are all positive integers.

%THM 2
\begin{lemma}
Let $x_1$, $x_2$ , and $d_{12}$ be positive integers with $1 \le x_2 \le x_1$ and $x_1-x_2 < d_{12} \le \sqrt{x_1^2+x_2^2}$. Then the parallelogram with edge length $x_1$ and $x_2$ and minor diagonal length $d_{12}$ is perfect if and only if $2x_1^2+2x_2^2-d_{12}^2$ is a square.
\end{lemma}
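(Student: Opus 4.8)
The plan is to use the parallelogram law, which relates the two diagonals of a parallelogram to its edges. For a parallelogram with edge vectors $\vec{a}$ and $\vec{b}$, the two diagonals are $\vec{a}+\vec{b}$ and $\vec{a}-\vec{b}$, and the identity $\|\vec{a}+\vec{b}\|^2 + \|\vec{a}-\vec{b}\|^2 = 2\|\vec{a}\|^2 + 2\|\vec{b}\|^2$ holds. So if the edge lengths are $x_1,x_2$ and the minor diagonal has length $d_{12}$, then the major diagonal $D_{12}$ necessarily satisfies $D_{12}^2 = 2x_1^2 + 2x_2^2 - d_{12}^2$. This is the computational identity the lemma is built on, and I would state it first.

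Next I would address the geometric realizability: given positive integers $x_1,x_2,d_{12}$, one must check that there actually is a (nondegenerate) parallelogram in the plane with edges $x_1,x_2$ and shorter diagonal $d_{12}$. By the triangle inequality applied to the triangle with sides $x_1,x_2,d_{12}$, such a triangle (hence parallelogram) exists and is nondegenerate precisely when $|x_1-x_2| < d_{12} < x_1+x_2$; the hypothesis $x_1 - x_2 < d_{12}$ (with $x_2 \le x_1$) gives the left inequality, and I would note the right inequality $d_{12} < x_1 + x_2$ follows from $d_{12} \le \sqrt{x_1^2+x_2^2} \le x_1 + x_2$ (strict unless one edge is zero, which is excluded). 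I would then confirm that $d_{12}$ really is the \emph{minor} diagonal under the stated bound $d_{12} \le \sqrt{x_1^2 + x_2^2}$: since $D_{12}^2 = 2x_1^2 + 2x_2^2 - d_{12}^2 \ge d_{12}^2$ exactly when $d_{12}^2 \le x_1^2 + x_2^2$, the bound $d_{12} \le \sqrt{x_1^2+x_2^2}$ is equivalent to $d_{12} \le D_{12}$, so $d_{12}$ is indeed the shorter diagonal (they are equal, and the parallelogram is a rectangle, exactly when $d_{12}^2 = x_1^2 + x_2^2$).

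With these observations in place, the equivalence is immediate. The parallelogram is determined (up to congruence) by $x_1,x_2,d_{12}$, and it is perfect if and only if all four lengths $x_1,x_2,d_{12},D_{12}$ are positive integers; the first three are integers by hypothesis, so perfection is equivalent to $D_{12}$ being a positive integer, i.e.\ to $D_{12}^2 = 2x_1^2 + 2x_2^2 - d_{12}^2$ being a perfect square (it is automatically a positive integer since $x_1,x_2,d_{12}$ are, and it is positive because $2x_1^2 + 2x_2^2 - d_{12}^2 > x_1^2 + x_2^2 - d_{12}^2 + x_1^2 > 0$, or more simply because a nondegenerate parallelogram has a positive diagonal length). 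I do not anticipate a real obstacle here; the only thing requiring care is being explicit that the stated inequalities are exactly what is needed to guarantee both nondegeneracy and that $d_{12}$ plays the role of the minor rather than the major diagonal, so that the phrase "the parallelogram with edge lengths $x_1,x_2$ and minor diagonal $d_{12}$" refers to a well-defined object.
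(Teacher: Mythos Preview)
Your proof is correct and takes the same approach as the paper: both rest on the parallelogram law $\|\vec u+\vec v\|^2 = 2\|\vec u\|^2 + 2\|\vec v\|^2 - \|\vec u-\vec v\|^2$, from which the equivalence is immediate. The paper's proof simply states this identity and stops; your additional care in checking that the hypotheses guarantee a nondegenerate parallelogram exists and that $d_{12}$ is genuinely the minor diagonal is a welcome elaboration but not a different method.
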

\begin{proof}
Let $\vec u$ and $\vec v$ be edge vectors for the parallelogram so that $\|\vec u\|=x_1$, $\|\vec v\|=x_2$, $\|\vec u-\vec v\|=d_{12}$, then the result follows from observing that
$\|\vec u+\vec v\|^2=2\|\vec u\|^2+2\|\vec v\|^2-\|\vec u-\vec v\|^2$
\end{proof}

The search technique that we used determined by brute force all such $x_1$, $x_2$, $d_{12}$ where $x_1$ was below some bound. Then all non-oblique assemblies of three such perfect parallelograms 
with matching pairs of edges were 
considered as possible perfect parallelepipeds. The search was implemented in J \cite{Jsoftware}. 
\par
Whether the body diagonals were of integer 
length was determined by the following lemma. We use $d_{ij}$ to denote the minor diagonal length of the parallelogram with edges $i$ and $j$. The body diagonal with edge $i$ 
having negative contribution is denoted $m_i$, $1\le i\le 3$ and $m_4$ denotes the length of the body diagonal when all edges contribute positively.

%thm 3
\begin{lemma}
Suppose there is a parallelepiped with edge lengths $x_1$, $x_2$, and $x_3$ and minor face diagonal lengths $d_{12}$, $d_{13}$ and $d_{23}$. Then the square of the body diagonal lengths are $m_1^2=-x_1^2+x_2^2+x_3^2+d_{12}^2+d_{13}^2-d_{23}^2$,
$m_2^2=x_1^2-x_2^2+x_3^2+d_{12}^2-d_{13}^2+d_{23}^2$, 
$m_3^2=x_1^2+x_2^2-x_3^2-d_{12}^2+d_{13}^2+d_{23}^2$, and 
$m_4^2=3x_1^2+3x_2^2+3x_3^2-d_{12}^2-d_{13}^2-d_{23}^2$. 
\end{lemma}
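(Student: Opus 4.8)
The plan is to read everything off the Gram matrix of three edge vectors and then expand four squared norms. First I would take edge vectors $\vec{u},\vec{v},\vec{w}$ for the three edge directions, with $\|\vec{u}\|=x_1$, $\|\vec{v}\|=x_2$, $\|\vec{w}\|=x_3$, oriented so that the three minor face diagonals appear as the \emph{differences} $\|\vec{u}-\vec{v}\|=d_{12}$, $\|\vec{u}-\vec{w}\|=d_{13}$, $\|\vec{v}-\vec{w}\|=d_{23}$; since $d_{ij}$ is the shorter of the two diagonals of its face, this is the same as choosing the signs of $\vec{u},\vec{v},\vec{w}$ so that $\vec{u}\cdot\vec{v}$, $\vec{u}\cdot\vec{w}$, $\vec{v}\cdot\vec{w}$ are all $\ge 0$.

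The core of the argument is then just polarization. A rearrangement of the identity used in the previous lemma gives $\|\vec{u}-\vec{v}\|^2=\|\vec{u}\|^2+\|\vec{v}\|^2-2\,\vec{u}\cdot\vec{v}$, hence $2\,\vec{u}\cdot\vec{v}=x_1^2+x_2^2-d_{12}^2$, and likewise $2\,\vec{u}\cdot\vec{w}=x_1^2+x_3^2-d_{13}^2$ and $2\,\vec{v}\cdot\vec{w}=x_2^2+x_3^2-d_{23}^2$. By the stated convention the four body diagonals are $-\vec{u}+\vec{v}+\vec{w}$, $\vec{u}-\vec{v}+\vec{w}$, $\vec{u}+\vec{v}-\vec{w}$ and $\vec{u}+\vec{v}+\vec{w}$, of lengths $m_1,m_2,m_3,m_4$. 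Expanding any one of these squared norms gives $\|\vec{u}\|^2+\|\vec{v}\|^2+\|\vec{w}\|^2$ together with the three cross terms $\pm2\,\vec{u}\cdot\vec{v}\pm2\,\vec{u}\cdot\vec{w}\pm2\,\vec{v}\cdot\vec{w}$, where the minus signs fall on exactly the cross terms that contain a negated edge vector (all three signs are $+$ for $m_4$, and one pair of them is negated for each of $m_1,m_2,m_3$); for instance $m_4^2=x_1^2+x_2^2+x_3^2+2\,\vec{u}\cdot\vec{v}+2\,\vec{u}\cdot\vec{w}+2\,\vec{v}\cdot\vec{w}$. Substituting the three dot-product formulas and collecting like powers of the $x_i$ and the $d_{ij}$ then yields the four displayed expressions. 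In the write-up I would carry out the $m_4^2$ case in full and remark that $m_1^2$, $m_2^2$, $m_3^2$ follow identically.

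I do not expect a real obstacle; the computation is routine linear algebra. The two points that deserve a line of care are the sign bookkeeping across the four expansions, and the orientation choice used in the first paragraph. For the latter I would note that one may negate an edge vector whenever that lowers the number of negative pairwise dot products, which produces an all-nonnegative choice unless the count stalls at one negative; in that residual case the face in question has its \emph{major} diagonal in the role of $\|\vec{e}_i-\vec{e}_j\|$, so the corresponding $d_{ij}^2$ in the four formulas should be read as $2x_i^2+2x_j^2-d_{ij}^2$. This caveat does not affect the way the lemma is applied in the search, so I would mention it only parenthetically.
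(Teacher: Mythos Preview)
Your proposal is correct and follows essentially the same route as the paper: choose edge vectors so the minor diagonals are the differences, solve for the pairwise dot products via polarization, and expand the four squared body-diagonal norms. The paper writes out the $m_1$ case explicitly and says ``the other cases are similar,'' whereas you plan to do $m_4$; otherwise the arguments are identical. Your closing paragraph on the orientation subtlety (that one cannot always make all three dot products nonnegative by sign flips) is a nice observation the paper's proof glosses over; the paper instead acknowledges this issue only later, after Lemma~4, when it distinguishes ``non-oblique'' vertices from configurations with an odd number of oblique angles.
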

\begin{proof}
Let $\vec{u}$, $\vec{v}$, $\vec{w}$ be edge vectors for the parallelepiped such that
$\|\vec{u}\|=x_1$, $\|\vec{v}\|=x_2$, $\|\vec{w}\|=x_3$,
 $\|\vec{u}-\vec{v}\|=d_{12}$,
 $\|\vec{u}-\vec{w}\|=d_{13}$,
 $\|\vec{v}-\vec{w}\|=d_{23}$. 
 Note that $2\vec{u} \cdot \vec{v}=x_1^2+x_2^2-d_{12}^2$ and likewise for the other dot products.
 We see that
 \begin{align}
 \|-\vec{u}+\vec{v}+\vec{w}\|^2 &=\|\vec{u}\|^2+ \|\vec{w}\|^2+\|\vec{w}\|^2-2\vec{u}\cdot\vec{v}-2\vec{u}\cdot\vec{w}+2\vec{v}\cdot\vec{w}\nonumber\\
&=x_1^2+ x_2^2+x_3^2-(x_1^2+x_2^2-d_{12}^2)-(x_1^2+x_3^2-d_{13}^2)\nonumber\\
&~~~~~~~~~~~~~~~~~~~~~~~~~~~~+(x_2^2+x_3^2-d_{23}^2)\nonumber\\
&=-x_1^2+x_2^2+x_3^2+d_{12}^2+d_{13}^2-d_{23}^2\nonumber
\end{align}
 as desired. The other cases are similar.
\end{proof}
Our search quickly located triples of perfect parallelograms with matching edge lengths $x_1$, $x_2$, and $x_3$ and minor diagonal lengths $d_{12}$, $d_{13}$ and $d_{23}$ that also had all four 
proposed body diagonals $m_1$, $m_2$, $m_3$ and $m_4$ of positive integer length. For example,
the smallest such is given by $x_1=115$, $x_2=106$, $x_3=83$, $d_{12}=31$, $d_{13}=58$ and $d_{23}=75$. However, these perfect parallelograms cannot be realized as a parallelepiped in $\mathbb{R}^3$.
\par
The following lemma gives the final criterion necessary for the assembly to be realizable. We let $\theta_{ij}$ denote the angle between edges $x_i$ and $x_j$ in the triangle with sides $x_i$, $x_j$ and $d_{ij}$ and let $c_{ij}$ denote the cosine of that angle. Note that $c_{ij}=cos(\theta_{ij})=\frac{x_i^2+x_j^2-d_{ij}^2}{2x_i x_j}$ and by our choice of minor diagonal $0\le c_{ij}<1$.

%thm 4
\begin{lemma}
An edge-matched assembly of three perfect parallelograms with edge lengths $x_1$, $x_2$, and $x_3$ and minor diagonal lengths $d_{12}$, $d_{13}$ and $d_{23}$ can be assembled in $\mathbb{R}^3$ into a parallelepiped if $c_{12}^2+c_{13}^2+c_{23}^2<1+2c_{12} c_{13} c_{23}$.  
\end{lemma}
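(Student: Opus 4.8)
\emph{Proof proposal.} The plan is to recognize the stated inequality as exactly the condition that the Gram matrix of the normalized edge vectors is positive definite, and then to build the parallelepiped from such a matrix. Set $c_{ij}=\frac{x_i^2+x_j^2-d_{ij}^2}{2x_ix_j}$ as in the discussion above, and consider the symmetric matrix
\[
G=\begin{pmatrix} 1 & c_{12} & c_{13}\\ c_{12} & 1 & c_{23}\\ c_{13} & c_{23} & 1\end{pmatrix}.
\]
A routine $3\times 3$ determinant expansion gives $\det G = 1+2c_{12}c_{13}c_{23}-c_{12}^2-c_{13}^2-c_{23}^2$, so the hypothesis of the lemma says precisely that $\det G>0$.

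First I would examine the smaller leading principal minors of $G$: the $1\times 1$ minor is $1>0$, and the $2\times 2$ minor is $1-c_{12}^2$, which is positive because $0\le c_{12}<1$ by the choice of the minor diagonal. Combined with $\det G>0$, Sylvester's criterion then shows $G$ is positive definite. Hence $G$ is realized as the Gram matrix of three linearly independent vectors $\hat u,\hat v,\hat w\in\mathbb R^3$ — for instance the rows of a factor $R$ in a Cholesky factorization $G=R^{\top}R$; equivalently one can place $\hat u$ along the first axis, $\hat v$ in the first coordinate plane, and solve for $\hat w$, where the square of the last coordinate of $\hat w$ works out to $\det G/(1-c_{12}^2)>0$, confirming that the construction is real and genuinely three-dimensional. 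By construction $\|\hat u\|=\|\hat v\|=\|\hat w\|=1$ and $\hat u\cdot\hat v=c_{12}$, $\hat u\cdot\hat w=c_{13}$, $\hat v\cdot\hat w=c_{23}$.

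Next I would rescale, putting $\vec u=x_1\hat u$, $\vec v=x_2\hat v$, $\vec w=x_3\hat w$, and take the parallelepiped they span. Then $\|\vec u\|=x_1$, $\|\vec v\|=x_2$, $\|\vec w\|=x_3$, and $\|\vec u-\vec v\|^2=x_1^2+x_2^2-2x_1x_2c_{12}=d_{12}^2$ by the definition of $c_{12}$, and likewise $\|\vec u-\vec w\|=d_{13}$ and $\|\vec v-\vec w\|=d_{23}$; so the three faces of this parallelepiped are congruent to the three given perfect parallelograms, and the assembly is realized in $\mathbb R^3$. I would close with a remark that the major face diagonal lengths $\|\vec u+\vec v\|$, $\|\vec u+\vec w\|$, $\|\vec v+\vec w\|$ are then automatically positive integers, since the parallelograms are perfect and $\|\vec u+\vec v\|^2=2\|\vec u\|^2+2\|\vec v\|^2-\|\vec u-\vec v\|^2$; and that if in addition the quantities $m_1^2,\dots,m_4^2$ of the body-diagonal lemma are perfect squares, the resulting parallelepiped is perfect.

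The only real obstacle is bookkeeping rather than depth: one must verify that the auxiliary leading minors of $G$ are positive so that Sylvester's criterion applies from the single hypothesis $\det G>0$, and one must be careful that it is the \emph{strict} inequality that guarantees linear independence, hence a nondegenerate solid rather than a flat (coplanar) configuration. The identity $\det G = 1+2c_{12}c_{13}c_{23}-\sum_{i<j}c_{ij}^2$ is the key computation tying the hypothesis to positive definiteness.
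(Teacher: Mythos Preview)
Your argument is correct. At bottom it is the same construction the paper uses, but you frame it through the Gram matrix and Sylvester's criterion, whereas the paper writes out explicit coordinates: it sets $\vec u=x_1\langle 1,0,0\rangle$, $\vec v=x_2\langle c_{12},\sqrt{1-c_{12}^2},0\rangle$, introduces $\rho=\dfrac{c_{23}-c_{12}c_{13}}{\sqrt{1-c_{12}^2}\sqrt{1-c_{13}^2}}$, takes $\vec w=x_3\langle c_{13},\rho\sqrt{1-c_{13}^2},\sqrt{1-\rho^2}\sqrt{1-c_{13}^2}\rangle$, and then checks that $\rho^2<1$ is equivalent to the stated inequality. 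Those explicit vectors are precisely the Cholesky factor you allude to, and your observation that the square of the last coordinate of $\hat w$ equals $\det G/(1-c_{12}^2)$ is exactly the identity $(1-\rho^2)(1-c_{13}^2)=\det G/(1-c_{12}^2)$. What your packaging buys is a clean reason why the single hypothesis $\det G>0$ suffices (the lower leading minors $1$ and $1-c_{12}^2$ are automatically positive since $0\le c_{12}<1$), without having to unwind the algebra of $\rho$; what the paper's packaging buys is concrete coordinates one can plug numbers into directly.
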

\begin{proof}
Let $\vec{u}=x_1\langle 1,0,0 \rangle$, 
$\vec{v}=x_2\langle c_{12},\sqrt{1-c_{12}^2},0\rangle$, $\rho=\frac{c_{23}-c_{12}c_{13}}{\sqrt{1-c_{12}^2}\sqrt{1-c_{13}^2}}$, and 
$\vec{w}=x_3\langle c_{13},\rho \sqrt{1-c_{13}^2},\sqrt{1-\rho^2}\sqrt{1-c_{13}^2}\rangle$. Direct computation shows that
that the parallelepiped generated by $\vec{u}$,  $\vec{v}$,  $\vec{w}$ realizes the parallelepiped with desired edges and minor diagonals 
provided that $-1<\rho<1$. Note that $p=\pm 1$ would yield a degenerate parallelepiped. A $Mathematica^{TM}$ script verifying those computations may be found at \cite{Sawyer}. The condition that $-1<\rho<1$ is equivalent to $\rho^2<1$ which is equivalent to 
$(c_{23}-c_{12}c_{13})^2<(1-c_{12}^2)(1-c_{13}^2)$ and that simplifies to the required inequality.  
\end{proof}
\par
The above lemma describes realizability using non-oblique assemblies at one vertex. Note that moving along any edge of such a perfect parallelepiped leads to a vertex
with two angles becoming non-acute. Thus, configurations with an odd number of oblique
angles at each vertex would be distinct from those above and these too exist \cite{Sawyer}.
At least the first two of those were first found by Randall Rathbun\cite{Rathbun}.   
\par
To give some sense of the number of edge-matched non-oblique configurations checked we offer sample statisitics. When checking edges up to $3949$ there were about $2\times10^{10}$ 
non-oblique edge-matched configurations tested. Of those, about $9\times 10^7$ satisfied one of the necessary body diagonal conditions from Lemma 3. About $1.7\times 10^6$ satisfied two; $33403$ satisfied three; $414$ satisfied all four. Of those, $27$ gave realizable perfect parallelipipeds.
\par
We have established that perfect parallelepipeds exist, and some with two rectangular faces exist. The question of whether perfect cuboids exist remains open. Intermediate questions are also open. Is there a perfect parallelepiped with integer volume? Is there a perfect parallelepiped with rational coordinates? 
\par
Acknowledgement: The support of a Lafayette EXCEL grant is appreciated.
%    Bibliographies can be prepared with BibTeX using amsplain,
%    amsalpha, or (for "historical" overviews) natbib style.
\bibliographystyle{amsplain}
%    Insert the bibliography data here.

\end{document}